\newcommand{\dataversione}{\today}
\numberwithin{equation}{section}
\newtheoremstyle{mytheorem}
{}% measure of space to leave above the theorem. E.g.: 3pt
{}% measure of space to leave below the theorem. E.g.: 3pt
{\it}% name of font to use in the body of the theorem
{\parindent}% measure of space to indent
{\bf}% name of head font
{.}% punctuation between head and body
{ }% space after theorem head; " " = normal interword space
{\thmnumber{#2.~}\thmname{#1}\thmnote{~\rm#3}}% Manually specify head
\newtheoremstyle{myremark}
{}% measure of space to leave above the theorem. E.g.: 3pt
{}% measure of space to leave below the theorem. E.g.: 3pt
{\rm}% name of font to use in the body of the theorem
{\parindent}% measure of space to indent
{\bf}% name of head font
{.}% punctuation between head and body
{ }% space after theorem head; " " = normal interword space
{\thmnumber{#2.~}\thmname{#1}\thmnote{~\rm#3}}% Manually specify head
\newtheoremstyle{myparagraph}
{}% measure of space to leave above the theorem. E.g.: 3pt
{}% measure of space to leave below the theorem. E.g.: 3pt
{\rm}% name of font to use in the body of the theorem
{\parindent}% measure of space to indent> 0. Clearly A identifies the linear, orientation preserving homeo-morphism ψ which maps t
{\bf}% name of head font
{.}% punctuation between head and body
{ }% space after theorem head; " " = normal interword space
{\thmnumber{#2.~}\thmname{#1}\thmnote{#3}}% Manually specify head
\theoremstyle{mytheorem}
\newtheorem{theorem}[subsection]{Theorem}
\newtheorem{lemma}[subsection]{Lemma}
\newtheorem{proposition}[subsection]{Proposition}
\theoremstyle{myremark}
\newtheorem{remark}[subsection]{Remark}
\theoremstyle{myparagraph}
\newtheorem*{parag*}{}
\def\@secnumfont{\sc}
\def\section{\@startsection{section}{1}%
\z@{1.5\linespacing\@plus .2\linespacing}{.7\linespacing}%
{\normalfont\sc\centering}}
\def\ps@headings{\ps@empty 
 \def\@evenhead{% 
  \setTrue{runhead}% 
  \normalfont\footnotesize 
  \rlap{\thepage}\hfil 
  \def\thanks{\protect\thanks@warning}% 
  \leftmark{}{}\hfil}% 
 \def\@oddhead{% 
  \setTrue{runhead}% 
  \normalfont\footnotesize\hfil 
  \def\thanks{\protect\thanks@warning}% 
  \rightmark{}{}\hfil \llap{\thepage}}% 
\let\@mkboth\markboth}
\renewenvironment{proof}[1][\proofname]{\par 
  \pushQED{\qed}% 
  \normalfont \topsep6\p@\@plus6\p@\relax 
  \trivlist 
  \itemindent\normalparindent 
  \item[\hskip\labelsep 
    \bfseries 
    #1\@addpunct{.}]\ignorespaces 
}{% 
  \popQED\endtrivlist\@endpefalse 
} 
\providecommand{\proofname}{Proof}
\newcommand{\R}{\mathbb{R}}
\newcommand{\N}{\mathbb{N}}
\newcommand{\dV}{d_V\kern-1pt}
\begin{document}

	% The following instructions defines the headings
	% for all pages except the first one
	% They can be safely removed
	%
\pagestyle{empty}
\pagestyle{myheadings}
\markboth%
{\underline{\centerline{\hfill\footnotesize%
\textsc{Andrea Marchese}%
\vphantom{,}\hfill}}}%
{\underline{\centerline{\hfill\footnotesize%
\textsc{BV homeomorphisms}%
\vphantom{,}\hfill}}}

	% We do not use the maketitle command, and
	% in the next lines we define the headline for the 
	% first page, then title, authors' names, and so on...
	% acknowledgements are at the end of the introduction
	% affiliations are at the end of the paper
	%
\thispagestyle{empty}

~\vskip -1.1 cm

	% heading of first page
	%
{\footnotesize\noindent
[version:~\dataversione]%
\hfill
%to apper on\dots
%\hfill DOI~\href{http://dx.doi.org/***}{***} \par
}

\vspace{1.7 cm}

	% title
	%
{\large\bf\centering
Residually many BV homeomorphisms\\
map a null set onto a set of full measure\\
}

\vspace{.6 cm}

	% authors' names
	%
\centerline{\sc Andrea Marchese}

\vspace{.8 cm}

{\rightskip 1 cm
\leftskip 1 cm
\parindent 0 pt
\footnotesize

	% abstract, keywords and MSC numbers
	%
{\sc Abstract.}
Let $Q$ be the open unit square in $\mathbb{R}^2$. We prove that in a natural complete metric space of $BV$ homeomorphisms $f:Q\rightarrow Q$ with $f_{|\partial Q}=Id$, residually many homeomorphisms 
(in the sense of Baire categories) map a null set onto a set of full measure, and vice versa. Moreover we observe that, for $1\leq p<2$, the family of $W^{1,p}$ homemomorphisms satisfying the above property is of first category.
\par
\medskip\noindent
{\sc Keywords:}
Sobolev homeomorphism, Baire categories, piecewise affine homeomorphism.
\par
\medskip\noindent
{\sc MSC (2010):46E35, 26B35} 
.
\par
}

%%%%%%%%%%%%%%%%%%%%%%%%%%%%%%%%%%%%%%%%%%%%%%%%%%%of Geometry and Functional Analysis” held
%in Tel Aviv. Grant value £1,400%%%%%%%%%%%%%%
%
%	INTRODUCTION 
%
%%%%%%%%%%%%%%%%%%%%%%%%%%%%%%%%%%%%%%%%%%%%%%%%%%%%%%%%%%%%%%%%%

\section{Introduction}
\label{s1}
\subsection{Notation and main result}
Denote by $|\cdot|_\infty$ the norm on $\R^4$ given by 
$$|(a,b,c,d)|_\infty=\max\{|a|,|b|,|c|,|d|\}.$$

Let $Q=(0,1)^2$ be the open unit square in $\R^2$.  
Consider a map $f:Q\rightarrow Q$ and denote by $f_1,f_2:\R^2\to\R$ its components, relative to the usual coordinates in $\R^2$.

Denote the \emph{variation} of $f$ in $Q$ by 
$$Var(f,Q):=\sup\left\{\int_Q(f_1{\rm{div}}\phi_1+f_2{\rm{div}}\phi_2)dx: |\phi(x)|_\infty\leq 1\;{\rm{for\:all}}\; x\in Q\right\},$$
where $\phi=(\phi_1,\phi_2)\in C^1_c(Q,\R^2\times\R^2)$ and the integration is with respect to the Lebesgue measure. Compare this definition with \cite[Definition 3.4]{AFP}: we do not use the usual notation $V(f,Q)$, to remark that we do not compute the norm of $\phi$ in the standard way. We say that a map $f:Q\to Q$ is of \emph{bounded variation} (shortly: $BV$) if $Var(f,Q)$ is finite.

Fix a constant $M>2$ and consider the set
$$X:=\{f:Q\rightarrow Q:\; f\;{\rm{is\;a\;}}BV\;{\rm{homeomorphism}},\; f_{|\partial Q}=Id,\; Var(f,Q)<M\},$$
and the distance on $X$
$$d_X(f,g):=\|f-g\|_\infty+\|f^{-1}-g^{-1}\|_\infty+\left|\frac{1}{M-Var(f,Q)}-\frac{1}{M-Var(g,Q)}\right|.$$
We will prove in \S \ref{s1} that $(X,d_X)$ is a complete metric space.
Now let us consider the following subset of $X$:
$$A:=\{f\in X: \exists E\subset Q, E \mbox{ Borel, } |E|=0, |f(E)|=1\},$$
where we denoted by $|E|$ the Lebesgue measure of $E$. 
The main result of the present paper is the following
\begin{theorem}\label{main}
The set $A$ is residual in $X$, i.e. it contains the intersection of countably many open dense subsets of $X$.
\end{theorem}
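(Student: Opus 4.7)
Proof proposal. My plan is a Baire category argument. For each $n\in\N$ I define
$$
B_n:=\{f\in X:\ \exists\ \text{open}\ U\subset Q,\ |U|<2^{-n},\ |f(U)|>1-2^{-n}\}.
$$
It suffices to show that each $B_n$ is open and dense in $X$, and that $\bigcap_n B_n\subset A$. The last inclusion is easy: for $f\in\bigcap_n B_n$ with witnesses $U_n$, Borel--Cantelli gives $|\limsup_n U_n|=0$, while $f(\limsup_n U_n)=\limsup_n f(U_n)$ (since $f$ is a bijection), which has measure at least $\limsup_n |f(U_n)|\ge 1$ by reverse Fatou on the finite-measure space $Q$; hence $f\in A$.

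Openness of $B_n$ is a compactness argument. Given $f\in B_n$ with witness $U$, inner regularity produces a compact $K\subset f(U)$ with $|K|>1-2^{-n}$. Then $f^{-1}(K)\subset U$ is compact and disjoint from the closed set $\ove{Q\setminus U}$, so $\delta:=\dist(f^{-1}(K),\ove{Q\setminus U})>0$. Any $g$ with $d(f,g)<\delta$ satisfies $\|g^{-1}-f^{-1}\|_\infty<\delta$, hence $g^{-1}(K)\subset U$, so $g\in B_n$ with the same $U$.

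Density of $B_n$ is the heart of the proof. Given $f\in X$ and $\eps>0$, the plan is to first approximate $f$ in the $d$-metric by a piecewise affine homeomorphism $f_0\in X$ with $Var(f_0)<M-\eta$ for some slack $\eta>0$, relying on the density of piecewise affine $BV$ homeomorphisms in $X$. Then I would tile $Q$ by fine axis-aligned rectangles $\{R_i\}$, each contained in a single affine piece of $f_0$, covering $Q$ except for a thin boundary strip whose $f_0$-image has arbitrarily small measure. On each $R_i$ I replace $f_0|_{R_i}$ by $\beta_i\circ H\circ\alpha_i^{-1}$, where $\alpha_i:Q\to R_i$ is the axis-aligned affine scaling, $\beta_i:=f_0|_{R_i}\circ\alpha_i$, and $H(x_1,x_2):=(\psi(x_1),\psi(x_2))$ is the product of a monotone $1$D homeomorphism $\psi:[0,1]\to[0,1]$ close to the identity that sends a null Cantor-type set $E_\psi$ onto a set of measure $>1-\delta_n$. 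Since $H=Id$ on $\partial Q$, the glued map $g$ agrees with $f_0$ on the rectangle skeleton and on the boundary strip, hence lies in $X$, and the witness set $E:=\bigcup_i \alpha_i(E_\psi\times[0,1])$ has $|E|=0$ and $|g(E)|\ge(1-\delta_n)\sum_i|f_0(R_i)|>1-2^{-n}$ for suitable parameters, so $g\in B_n$.

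The main obstacle I expect is the bound $Var(g)<M$: post-composition with the non-Lipschitz map $H$ could a priori inflate the variation. The key observation is that when $\alpha_i$ is axis-aligned (its linear part is diagonal), the product structure of $H$ together with the monotonicity of $\psi$ (which makes $D\psi$ a non-negative measure of total mass $1$) yields $Var(g|_{R_i},R_i)=Var(f_0|_{R_i},R_i)$ exactly; summing gives $Var(g)=Var(f_0)<M-\eta$. Uniform closeness of $g$ to $f_0$ and of $g^{-1}$ to $f_0^{-1}$ (below $\eps/2$) is controlled by taking $\|\psi-Id\|_\infty$ and $\|\psi^{-1}-Id\|_\infty$ small and transporting through the affine scalings $\alpha_i,\beta_i$. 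Hence $d(f,g)<\eps$ and $g\in B_n$, which establishes density of each $B_n$ and the theorem via Baire.
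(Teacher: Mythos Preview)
Your argument is correct and takes a genuinely different route from the paper's in the density step. The paper builds, for each affine piece, an explicit \emph{piecewise affine} ``building block'' $\phi_n$ (Section~\ref{s3}) that sends a small set to a large one at the price of a multiplicative increase $(1+2/\sqrt{n})$ in the variation; controlling this increase is the reason the paper carries the third term in the metric $d$. You instead precompose the affine map on each axis-aligned rectangle with a product map $H(x_1,x_2)=(\psi(x_1),\psi(x_2))$ built from a single monotone $1$D homeomorphism $\psi$. Because the paper's variation is the anisotropic one induced by the $|\cdot|_\infty$ norm on test functions, it decouples as $\sum_{j,k}|D_k g_j|$, and for $g=A\,(\tilde h_1(x_1),\tilde h_2(x_2))^t+v$ with $\tilde h_1,\tilde h_2$ increasing and endpoint-fixing one gets $|D_k g_j|(R_i)=|A_{jk}|\,|R_i|$ exactly; hence $Var(g,R_i)=Var(f_0,R_i)$ with no error at all. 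This is cleaner than the paper's bound and makes the ``slack'' in $Var(f_0)<M$ essentially irrelevant. The paper's construction, on the other hand, keeps the approximant piecewise affine (your $g$ is genuinely $BV\setminus W^{1,1}$ in general), which parallels Hencl's block more closely but is not needed for the statement.

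Two small points. First, your $B_n$ asks for an \emph{open} witness $U$, while the set $E=\bigcup_i\alpha_i(E_\psi\times[0,1])$ you produce is null; replace $E_\psi$ by a small open neighbourhood (or by a finite union of short intervals that $\psi$ inflates), which costs nothing. Second, what you call ``post-composition with $H$'' is in fact precomposition (you write $g=\beta_i\circ H\circ\alpha_i^{-1}=f_0\circ(\alpha_i\circ H\circ\alpha_i^{-1})$), and this is precisely why the variation identity holds; post-composing an affine map by a devil's-staircase product would \emph{not} preserve the anisotropic variation in general. The openness argument and the Borel--Cantelli/reverse-Fatou step for $\bigcap_n B_n\subset A$ match the paper's Lemma~\ref{lopen} and the end of the proof of Theorem~\ref{main}.
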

The Baire theorem (see for instance \cite{Ru}) implies that the set $A$ is non-empty and, more precisely, that it is dense in $X$.

\subsection{Strategy for the proof of Theorem \ref{main}}
We introduce the following family of subsets of $X$. For every $n\in\N$, we denote
$$A_n:=\{f\in X: \exists E\subset Q, |E|<1/n, |f(E)|>1-1/n\},$$
where the set $E$ is a union of finitely many pairwise disjoint open triangles (the number of such triangles may depend both on $n$ and on the function $f$).

It is easy to see that the set $A$ contains the intersection of the $A_n$'s (see \S\ref{s4}). Hence, to prove Theorem \ref{main} it suffices to show that the sets $A_n$ are open and dense in $X$. The openness is an easy issue (see Lemma \ref{lopen}), while density is more delicate (see Lemma \ref{ldense}), so we will give here a sketch of the proof of the second property. We prove both properties in \S \ref{s4}.

Fix $n\in \N$, $f\in X$ and $\varepsilon>0$. We want to find $f_{\varepsilon}\in A_n$ with $d_X(f,f_{\varepsilon})<\varepsilon$. Firstly we use the main result of \cite{PR} to obtain an orientation preserving, (finitely) piecewise affine homeomorphism $g_{\varepsilon}\in X$ with
$$d_X(f,g_{\varepsilon})<\varepsilon/4.$$

Then we take a finite triangulation of $Q$ such that $g_{\varepsilon}$ is affine on each triangle. If necessary, we can refine such triangulation in order to obtain a new finite triangulation $\tau$ such that 
the diameter of all triangles $T\in\tau$ and of their images through $g_{\varepsilon}$ do not exceed $\varepsilon/8$. This ensures that any perturbation $h$ of $g_\varepsilon$,
which agrees with $g_\varepsilon$ on $\partial T$ for every $T\in\tau$, satisfies 
$${\|h-g_{\varepsilon}\|}_\infty+{\|h^{-1}-g^{-1}_{\varepsilon}\|}_\infty\leq\varepsilon/4.$$

Finally we modify the homeomorphism $g_{\varepsilon}$ inside each triangle $T\in\tau$ in order to obtain a new orientation preserving homeomorphism $f_{\varepsilon}\in X$ with the following properties:
\begin{enumerate}
\item $f_{\varepsilon}$ is (finitely) piecewise affine on each triangle $T\in\tau$;
\item for every $T\in\tau$ it holds ${f_{\varepsilon}}_{|\partial T}={g_{\varepsilon}}_{|\partial T};$
\item $\left|\frac{1}{M-Var(f_{\varepsilon},Q)}-\frac{1}{M-Var(g_{\varepsilon},Q)}\right|\leq\varepsilon/2$;
\item for every $T\in\tau$ there exists a set $F\subset T$ which is the union of finitely many pairwise disjoint open triangles and satisfies
$$\frac{Area(F)}{Area(T)}<1/n;\;\frac{Area(f_{\varepsilon}(F))}{Area(f_{\varepsilon}(T))}>1-1/n.$$
\end{enumerate}
Clearly the property of the triangulation $\tau$ implies that 
$$\|f_{\varepsilon}-g_{\varepsilon}\|_\infty+\|f_{\varepsilon}^{-1}-g_{\varepsilon}^{-1}\|_\infty<\varepsilon/4$$
and, together with property (3), this implies that $d_X(f,f_{\varepsilon})<\varepsilon$. Moreover properties (1),(2) and (4) imply that $f_\varepsilon\in A_n$.

The construction of $f_{\varepsilon}$ starting from $g_{\varepsilon}$ uses a piecewise affine homeomorphism $\phi_n$ (defined in \S \ref{s3}), which maps a square $Q$ onto a parallelogram $P$ and coincides with the affinity between $Q$ and $P$ on $\partial Q$. This map is similar in spirit to the ``basic building block'' used in \cite{He} to construct Sobolev homeomorphisms with zero distributional Jacobian almost everywhere. The 
main difference between the two maps is that, although in both cases the aim is to map a small subset $F$ of $Q$ onto a (proportionally) much larger set $F'$, with a small cost in the variation, we want in addition that $F'$ is almost a set of full measure in $P$. 

\begin{parag*}[Acknowledgements]
The author would like to warmly thank Guido De Philippis and Aldo Pratelli for valuable suggestions and helpful comments. The author would like to acknowledge the support of the Max Planck Institute for Mathematics in Leipzig, where he was hosted when this note was written.
\end{parag*}

\section{The metric space $X$}
\label{s2}
In this section we prove that the pair $(X,d_X)$ defined in the Introduction actually identifies a complete metric space. Since there are no doubts that $d_X$ defines a metric on $X$, we will focus on the completeness.
\begin{proposition}
The metric space $(X,d_X)$ is complete.
\end{proposition}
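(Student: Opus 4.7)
The plan is to take a Cauchy sequence $(f_n)\subset X$ and extract a limit $f\in X$ with $d(f_n,f)\to 0$. From the very definition of $d$, the Cauchy condition forces $(f_n)$ and $(f_n^{-1})$ to be Cauchy in $\|\cdot\|_\infty$, so both converge uniformly on $\overline Q$ to continuous maps $f,g\colon\overline Q\to\overline Q$, each equal to the identity on $\partial Q$ (inherited from the $f_n$). Moreover, the third term of $d$ being Cauchy in $\R$ forces $Var(f_n,Q)\to V_*$ for some $V_*\in[0,M)$, since $1/(M-Var(f_n,Q))$ converges to a finite limit $\ell\ge 1/M$.

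I would then verify that $f\in X$. To see that $f$ is a homeomorphism with $f^{-1}=g$, pass to the limit in $f_n\circ f_n^{-1}=\mathrm{Id}=f_n^{-1}\circ f_n$ via
$$|f(g(x))-x|\leq |f(g(x))-f_n(g(x))|+|f_n(g(x))-f_n(f_n^{-1}(x))|,$$
using that the first summand tends to $0$ by uniform convergence $f_n\to f$ and the second by continuity of $f_n$ applied to the uniformly convergent arguments $f_n^{-1}(x)\to g(x)$ (the continuity modulus of $f_n$ being controlled by that of $f$ thanks to $\|f_n-f\|_\infty\to 0$). Invariance of domain then gives $f(Q)=Q$, and standard lower semicontinuity of the $BV$ variation under $L^1$-convergence (\emph{a fortiori} under uniform convergence on $Q$) yields $Var(f,Q)\le V_*<M$, so $f\in X$.

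The main obstacle is the final step: showing $d(f_n,f)\to 0$. Since the first two terms of $d(f_n,f)$ vanish by uniform convergence, everything reduces to proving $Var(f,Q)=V_*$, i.e., \emph{equality} in the lower-semicontinuity estimate. This does not follow from LSC alone, and I expect the argument here to exploit the coupling of the three convergences in $d$ together with the rigidity provided by the homeomorphism structure and the parallel uniform convergence of the inverses. A natural strategy is, given $\varepsilon>0$, to pick a test field $\phi$ with $|\phi|_\infty\leq 1$ nearly saturating $Var(f_n,Q)$ for some large fixed $n$ and to use it to bound $Var(f,Q)$ from below; since $\mathrm{div}\,\phi$ is not a priori controlled by the bound on $\phi$ itself, the argument will likely need a careful selection of $\phi$ from a controlled subfamily, or a separate approximation of each $f_n$ by a piecewise affine map on which the variation can be computed explicitly (as in the constructions used elsewhere in the paper).
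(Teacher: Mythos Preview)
Your analysis is correct up to and including the identification of the ``main obstacle'': establishing $Var(f,Q)=V_*$ rather than merely $Var(f,Q)\le V_*$. You are right that lower semicontinuity alone does not give this, and the strategies you sketch (choosing a near-extremal test field, or passing through a piecewise affine approximation) cannot close the gap. In fact the gap is unbridgeable: the proposition, as stated, is false.

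Here is a counterexample. Fix any smooth homeomorphism $g:\overline Q\to\overline Q$ with $g_{|\partial Q}=Id$ and $Var(g,Q)=C>2$, and take $M>C$. For each $n$, tile $Q$ by the $n^2$ congruent open squares of side $1/n$ and, on each small square $S$, let $f_n$ be the conjugate of $g$ by the homothety taking $\overline Q$ onto $\overline S$. Each $f_n\in X$; since conjugation by a homothety leaves the gradient matrix unchanged,
\[
Var(f_n,Q)=n^2\cdot \frac{1}{n^2}\,Var(g,Q)=C\qquad\text{for every }n,
\]
while $\|f_n-Id\|_\infty\le \sqrt 2/n$ and likewise for the inverses. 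Hence $(f_n)$ is $d$-Cauchy (the third term in $d(f_m,f_n)$ vanishes identically), its only possible uniform limit is $Id$, yet
\[
d(f_n,Id)\;\longrightarrow\; \Big|\tfrac{1}{M-C}-\tfrac{1}{M-2}\Big|>0,
\]
so $(f_n)$ does not converge in $(X,d)$.

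The paper's own proof shares exactly the gap you located: it shows the uniform limit $f$ lies in $X$ (i.e.\ $Var(f,Q)<M$) but never addresses convergence of the third term of $d$, which would require $Var(f_n,Q)\to Var(f,Q)$. As the example shows, this can genuinely fail. One clean repair, preserving the rest of the paper verbatim, is to work in the space of pairs $(f,V)$ with $f$ as before and $Var(f,Q)\le V<M$, metrized by $\|f-g\|_\infty+\|f^{-1}-g^{-1}\|_\infty+\big|\tfrac{1}{M-V}-\tfrac{1}{M-W}\big|$; this space \emph{is} complete (the limit of a Cauchy sequence $((f_n,V_n))$ is $(f,V_*)$, admissible by lower semicontinuity), and the sets $A_n$ depend only on $f$.
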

\begin{proof}
It is a well known result in functional analysis that a sequence $(f_i)_{i\in\N}$ of $BV$ maps which is Cauchy with respect to the supremum norm and such that the variations $Var(f_i,Q)<M$ are equi-bounded 
converges to a $BV$ map $f$ with $Var(f,Q)\leq M$ (See e.g. propositions 3.6 and 3.13 of \cite{AFP}: clearly our re-norming of $\R^4$ does not affect the validity of such statements). We need to prove that if the sequence is Cauchy with respect to the distance $d_X$, then the limit $f$ remains a homeomorphism, and moreover 
$Var(f,Q)<M$. To prove that $f$ is a homeomorphism it is sufficient to observe that the sequence of continuous functions $(f_i^{-1})_{i\in\N}$ is Cauchy with respect to the supremum norm and therefore it 
converges to a continuous function $g$, which is the inverse of $f$ (the latter is a trivial fact of general topology). 

Assume now by contradiction that $Var(f,Q)=M$. The lower semicontinuity of the variation with respect to the uniform convergence implies that 
$$\lim_{i\to\infty}Var(f_i,Q)=M,$$ 
which implies that, for every fixed $m\in\N$ the quantity
$$\left|\frac{1}{M-Var(f_m,Q)}-\frac{1}{M-Var(f_j,Q)}\right|$$
is unbounded in $j$, hence $(f_i)_{i\in\N}$ is not Cauchy with respect to $d_X$, which is a contradiction.
\end{proof}

\begin{remark}
The completeness of the metric space $(X,d_X)$ is necessary in order to apply the Baire theorem. A non-strict inequality on the variation in the definition of $X$ would be arguably a more natural choice, and in particular this would allow us to drop the last term in the definition of $d_X$. Nevertheless we introduced such metric space, because in order both to perform the piecewise affine approximation of \cite{PR} and to modify such approximation using the 
homeomorphism $\phi_n$, 
we may need to increase the variation of a small amount. 

It is actually possible to circumvent this issue even in the setting mentioned above, i.e. when we just require $Var(f,Q)\leq M$: indeed it is sufficient to approximate preliminarily a BV homeomorphism $f$ by homeomorphisms having smaller variation. This can be always achieved if $Var(f,Q)>2$, by ``interpolating'' a contraction of the original homeomorphism $f$ in an inner square
concentric to $Q$ and the identity on the outer frame.
\end{remark}

\section{The homeomorphism $\phi_n$}
\label{s3}
In this section we prove the following 

\begin{proposition}\label{propphin}
For every affine homeomeorphism $\phi$ defined on a square $Q$ with edges parallel to the coordinate lines and for every $n\in\N$ there exists piecewise affine map $\phi_n$ on Q and a set $F_n$ which is a finite union of pairwise disjoint open triangles such that 
\begin{enumerate}
\item $|Var(\phi_n,Q)-Var(\phi,Q)|\leq (1/n) Var(\phi,Q)$;
\item ${\phi_n}_{|\partial Q}={\phi}_{|\partial Q}$;
\item $\frac{Area(F_n)}{Area(Q)}<1/n;\;\frac{Area(\phi_n(F_n))}{Area(\phi(Q))}>1-1/n.$
\end{enumerate}
\end{proposition}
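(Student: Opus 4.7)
The statement is essentially a clean restatement of what was already carried out in this section, with a suitable re-indexing of the internal parameter. I would prove it by choosing $m=m(n)$ sufficiently large, invoking the construction of the map labeled $\phi_n$ above with that larger parameter, and verifying the three bullet points. A short preliminary reduction brings any affine $\phi$ on any square with edges parallel to the axes into the standard setting already treated.

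For the reduction, write $Q=[x_0,x_0+L]\times[y_0,y_0+L]$ and $\phi(x)=Ax+c$ with $A\in\mathrm{Mat}(2\times 2)$. Conjugating by the affine change of coordinates $x\mapsto (x-(x_0,y_0))/L$ in the source and $y\mapsto y-c$ in the target turns $\phi$ into a linear map on $[0,1]^2$. Translations in source and target do not change the variation or either area ratio, while the uniform rescaling by $L$ multiplies $Var(\phi,Q)$, $Area(F_n)$, $Area(Q)$, $Area(\phi_n(F_n))$ and $Area(\phi(Q))$ all by the same factor $L^2$, so the three claimed inequalities are invariant under the reduction. If $\det A<0$ one additionally pre-composes with a reflection across a coordinate axis to land in the case $\det A>0$ handled above.

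Once in the standard setting, I would set $\phi_n:=\phi_m$ and $F_n:=F_m$, where $\phi_m$ and $F_m$ are the map and set produced by the construction of this section with parameter $m$, and where $F_m$ is obtained from the given union of rectangles and triangles by splitting each copy of $\mathrm{int}(R')$ into two disjoint open triangles as indicated in the footnote following \eqref{eqF}. The estimates already proved,
\[
Var(\phi_m,Q)\leq (1+2/m^{1/2})\,Var(\phi,Q),\qquad |F_m|<1/m^{1/2},
\]
\[
|\phi_m(F_m)|\geq (1-1/(2m))(1-1/m^{1/2})\det A,
\]
together with ${\phi_m}_{|\partial Q}=\phi_{|\partial Q}$ and $\det A=Area(\phi(Q))$, reduce the proposition to the arithmetic fact that for $m\geq 4n^2+1$ one has $2/m^{1/2}\leq 1/n$, $1/m^{1/2}<1/n$, and $(1-1/(2m))(1-1/m^{1/2})>1-1/n$; any such choice of $m$ yields (1), (2), (3) simultaneously.

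There is no real obstacle in this step: the substantive work, namely the explicit tiling $R'\cup R''\cup T_1\cup\cdots\cup T_4$ together with its symmetric copies, the verification that $\phi_n$ is an orientation-preserving piecewise affine homeomorphism with the prescribed boundary values, and the piecewise computation of $\mathbb{E}(\phi_n)$ on each tile, was already completed above. The proposition merely repackages those estimates in a form convenient for the density argument of Section~\ref{s4}.
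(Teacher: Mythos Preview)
Your proposal is correct and follows exactly the approach the paper intends: as the footnote preceding the proposition says, the statement is obtained by applying the construction of Section~\ref{s3} with a parameter larger than $n$, which is precisely what you do by setting $\phi_n:=\phi_m$ for $m$ large. Your explicit reduction from a general square and general affine map to the unit square with a linear map of positive determinant is a detail the paper leaves implicit, but otherwise the arguments coincide.
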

\begin{proof}
Let $(x,y)$ denote the usual coordinates on the plane and let $\overline Q$ be the unit square $[0,1]\times[0,1]$. Consider a linear map 
$$A:=\left(\begin{array}{cc}
a & b\\
c & d\end{array}\right)$$
with $\det(A)>0$. Clearly $A$ identifies the linear, orientation preserving homeomorphism $\psi$ which maps the points $(1,0)$ and $(0,1)$ in $(a,c)$ and $(b,d)$ respectively.
Fix $n\in\N$, $n>2$. We will define a piecewise affine, orientation preserving homeomorphism $\phi_n$ such that 

\begin{equation}\label{eqF2}
Var(\phi_n,Q)\leq(1+(2/n^{1/2}))Var(\psi,Q),\quad\mbox{ and \quad${\phi_n}_{|\partial Q}={\psi}_{|\partial Q}$.}
\end{equation} 
Moreover we construct $\phi_n$ in such a way that there exists a set $F\subset Q$ which is a union of finitely many pairwise disjoint open triangles, satisfying 
\begin{equation}\label{eqF}
Area(F)<\frac{1}{n^{1/2}};\quad Area(\phi_n(F))\geq\left(1-\frac{1}{2n}\right)\left(1-\frac{1}{n^{1/2}}\right)\det(A). 
\end{equation}
Since $n$ is arbitrary, \eqref{eqF2} and \eqref{eqF} would imply the validity of conditions $(1)-(3)$ of the Proposition for the choice $\phi=\psi$. The validity for general $\phi$ is obtained simply by translating and rescaling.\\

For $i=0,\ldots,n^2-1$, let $R_i$ be the rectangle 
$$R_i:=[0,1]\times[i/n^2,(i+1)/n^2].$$
Denote 
$$R':=[1/n,1-(1/n)]\times[0,1/n^{5/2}]\subset R_0$$
and
$$R'':=[1/n,1-(1/n)]\times[1/n^{5/2},1/n^2]\subset R_0.$$
Finally consider $R_0\setminus(R'\cup R'')$. We define a partition of the left rectangle 
$$R''':=[0,1/n]\times[0,1/n^2]$$ 
and on the right rectangle we define the symmetric partition with respect to the axis $x=1/2$.
Let us write 
$$R'''=T_1\cup T_2\cup T_3\cup T_4,$$
where (see Figure \ref{F1}):
\begin{itemize}
\item $T_1$ has vertices in $(0,0)$, $(1/n,0)$, and $(0,1/n^{5/2})$,
\item $T_2$ has vertices in $(0,1/n^{5/2})$, $(1/n,0)$, and $(1/n,1/n^{5/2})$,
\item $T_3$ has vertices in $(0,1/n^{5/2})$, $(1/n,1/n^{5/2})$, and $(1/n,1/n^2)$,
\item $T_4$ has vertices in $(0,1/n^{5/2})$, $(1/n,1/n^2)$, and $(0,1/n^2)$.
\end{itemize}

\begin{figure}[htbp]
\begin{center}
\scalebox{1}{
\setlength{\unitlength}{4144sp}%
\begingroup\makeatletter\ifx\SetFigFont\undefined%
\gdef\SetFigFont#1#2#3#4#5{%
  \reset@font\fontsize{#1}{#2pt}%
  \fontfamily{#3}\fontseries{#4}\fontshape{#5}%
  \selectfont}%
\fi\endgroup%
\begin{picture}(5424,5064)(-11,-5113)
\thinlines
{\color[rgb]{0,0,0}\put(901,-3661){\line( 1, 0){4500}}
}%
{\color[rgb]{0,0,0}\put(4501,-3661){\line( 0,-1){900}}
}%
{\color[rgb]{0,0,0}\put(1801,-3661){\line( 0,-1){900}}
}%
{\color[rgb]{0,0,0}\put(1801,-4336){\line(-1, 0){900}}
}%
{\color[rgb]{0,0,0}\put(4501,-4336){\line( 1, 0){900}}
}%
{\color[rgb]{0,0,0}\put(901,-4336){\line( 4, 3){900}}
}%
{\color[rgb]{0,0,0}\put(4501,-3661){\line( 4,-3){900}}
}%
{\color[rgb]{0,0,0}\put(5401,-4336){\line(-4,-1){900}}
}%
{\color[rgb]{0,0,0}\multiput(901,-2761)(45.00000,0.00000){101}{\makebox(1.5875,11.1125){\tiny.}}
}%
{\color[rgb]{0,0,0}\multiput(901,-1861)(45.00000,0.00000){101}{\makebox(1.5875,11.1125){\tiny.}}
}%
{\color[rgb]{0,0,0}\multiput(901,-961)(45.00000,0.00000){101}{\makebox(1.5875,11.1125){\tiny.}}
}%
{\color[rgb]{0,0,0}\multiput(1801,-3661)(0.00000,45.00000){81}{\makebox(1.5875,11.1125){\tiny.}}
}%
{\color[rgb]{0,0,0}\multiput(4501,-3661)(0.00000,45.00000){81}{\makebox(1.5875,11.1125){\tiny.}}
}%
{\color[rgb]{0,0,0}\multiput(901,-2536)(45.00000,0.00000){101}{\makebox(1.5875,11.1125){\tiny.}}
}%
{\color[rgb]{0,0,0}\multiput(901,-1636)(45.00000,0.00000){101}{\makebox(1.5875,11.1125){\tiny.}}
}%
{\color[rgb]{0,0,0}\multiput(901,-736)(45.00000,0.00000){101}{\makebox(1.5875,11.1125){\tiny.}}
}%
{\color[rgb]{0,0,0}\multiput(901,-3436)(42.85714,-10.71429){22}{\makebox(1.5875,11.1125){\tiny.}}
}%
{\color[rgb]{0,0,0}\multiput(901,-2536)(42.85714,-10.71429){22}{\makebox(1.5875,11.1125){\tiny.}}
}%
{\color[rgb]{0,0,0}\multiput(901,-1636)(42.85714,-10.71429){22}{\makebox(1.5875,11.1125){\tiny.}}
}%
{\color[rgb]{0,0,0}\multiput(901,-736)(42.85714,-10.71429){22}{\makebox(1.5875,11.1125){\tiny.}}
}%
{\color[rgb]{0,0,0}\multiput(4501,-3661)(42.85714,10.71429){22}{\makebox(1.5875,11.1125){\tiny.}}
}%
{\color[rgb]{0,0,0}\multiput(4501,-2761)(42.85714,10.71429){22}{\makebox(1.5875,11.1125){\tiny.}}
}%
{\color[rgb]{0,0,0}\multiput(4501,-1861)(42.85714,10.71429){22}{\makebox(1.5875,11.1125){\tiny.}}
}%
{\color[rgb]{0,0,0}\multiput(4501,-961)(42.85714,10.71429){22}{\makebox(1.5875,11.1125){\tiny.}}
}%
{\color[rgb]{0,0,0}\multiput(901,-3436)(36.00000,27.00000){26}{\makebox(1.5875,11.1125){\tiny.}}
}%
{\color[rgb]{0,0,0}\multiput(901,-2536)(36.00000,27.00000){26}{\makebox(1.5875,11.1125){\tiny.}}
}%
{\color[rgb]{0,0,0}\multiput(901,-1636)(36.00000,27.00000){26}{\makebox(1.5875,11.1125){\tiny.}}
}%
{\color[rgb]{0,0,0}\multiput(901,-736)(36.00000,27.00000){26}{\makebox(1.5875,11.1125){\tiny.}}
}%
{\color[rgb]{0,0,0}\multiput(4501,-61)(36.00000,-27.00000){26}{\makebox(1.5875,11.1125){\tiny.}}
}%
{\color[rgb]{0,0,0}\multiput(4501,-961)(36.00000,-27.00000){26}{\makebox(1.5875,11.1125){\tiny.}}
\multiput(5401,-1636)(-45.00000,0.00000){2}{\makebox(1.5875,11.1125){\tiny.}}
}%
{\color[rgb]{0,0,0}\multiput(4501,-1861)(36.00000,-27.00000){26}{\makebox(1.5875,11.1125){\tiny.}}
}%
{\color[rgb]{0,0,0}\multiput(4501,-2761)(36.00000,-27.00000){26}{\makebox(1.5875,11.1125){\tiny.}}
}%
{\color[rgb]{0,0,0}\put(1801,-4336){\line( 1, 0){2700}}
}%
{\color[rgb]{0,0,0}\put(904,-4325){\line( 4,-1){900}}
}%
{\color[rgb]{0,0,0}\multiput(901,-3436)(45.00000,0.00000){101}{\makebox(1.5875,11.1125){\tiny.}}
}%
{\color[rgb]{0,0,0}\put(901,-61){\line( 0,-1){4500}}
\put(901,-4561){\line( 1, 0){4500}}
\put(5401,-4561){\line( 0, 1){4500}}
\put(5401,-61){\line(-1, 0){4500}}
}%
{\color[rgb]{0,0,0}\put(901,-5011){\line( 1, 0){900}}
}%
{\color[rgb]{0,0,0}\put(901,-4921){\line( 0,-1){180}}
}%
{\color[rgb]{0,0,0}\put(1801,-4921){\line( 0,-1){180}}
}%
{\color[rgb]{0,0,0}\put(  1,-4561){\line( 1, 0){ 90}}
}%
{\color[rgb]{0,0,0}\put(  1,-3661){\line( 1, 0){ 90}}
}%
{\color[rgb]{0,0,0}\put( 46,-3661){\line( 0,-1){900}}
}%
{\color[rgb]{0,0,0}\put(181,-4336){\line( 0,-1){225}}
}%
{\color[rgb]{0,0,0}\put(136,-4561){\line( 1, 0){ 90}}
}%
{\color[rgb]{0,0,0}\put(136,-4336){\line( 1, 0){ 90}}
}%
\put(2971,-4516){\makebox(0,0)[lb]{\smash{{\SetFigFont{12}{14.4}{\rmdefault}{\mddefault}{\updefault}{\color[rgb]{0,0,0}$R'$}%
}}}}
\put(2971,-4066){\makebox(0,0)[lb]{\smash{{\SetFigFont{12}{14.4}{\rmdefault}{\mddefault}{\updefault}{\color[rgb]{0,0,0}$R''$}%
}}}}
\put(946,-4516){\makebox(0,0)[lb]{\smash{{\SetFigFont{12}{14.4}{\rmdefault}{\mddefault}{\updefault}{\color[rgb]{0,0,0}$T_1$}%
}}}}
\put(1576,-4471){\makebox(0,0)[lb]{\smash{{\SetFigFont{12}{14.4}{\rmdefault}{\mddefault}{\updefault}{\color[rgb]{0,0,0}$T_2$}%
}}}}
\put(991,-3886){\makebox(0,0)[lb]{\smash{{\SetFigFont{12}{14.4}{\rmdefault}{\mddefault}{\updefault}{\color[rgb]{0,0,0}$T_4$}%
}}}}
\put(1396,-4156){\makebox(0,0)[lb]{\smash{{\SetFigFont{12}{14.4}{\rmdefault}{\mddefault}{\updefault}{\color[rgb]{0,0,0}$T_3$}%
}}}}
\put(1216,-4921){\makebox(0,0)[lb]{\smash{{\SetFigFont{12}{14.4}{\rmdefault}{\mddefault}{\updefault}{\color[rgb]{0,0,0}$1/n$}%
}}}}
\put(316,-4516){\makebox(0,0)[lb]{\smash{{\SetFigFont{12}{14.4}{\rmdefault}{\mddefault}{\updefault}{\color[rgb]{0,0,0}$1/n^{5/2}$}%
}}}}
\put(181,-3931){\makebox(0,0)[lb]{\smash{{\SetFigFont{12}{14.4}{\rmdefault}{\mddefault}{\updefault}{\color[rgb]{0,0,0}$1/n^2$}%
}}}}
\end{picture}%
}
\caption{Tiling of the square $Q$.}
\label{F1}
\end{center}
\end{figure}

\begin{figure}[htbp]
\begin{center}
\scalebox{1}{
\includegraphics{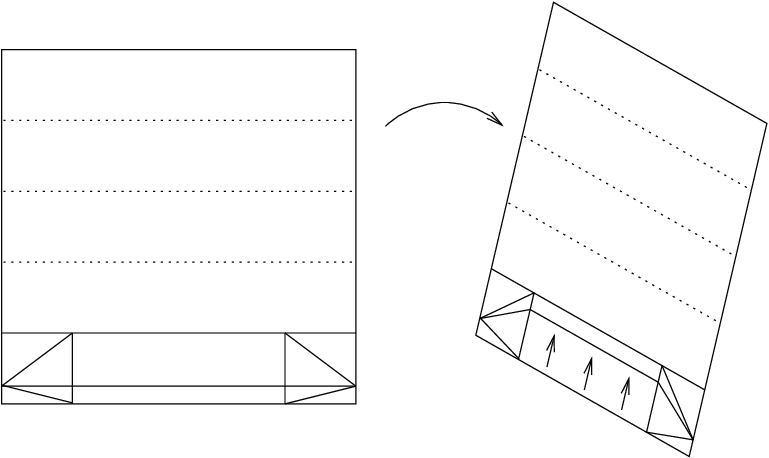}
}
\caption{Representation of the map $\phi_n$.}
\label{F2}
\end{center}
\end{figure}

Now we define the homeomorphism $\phi_n$ in the rectangle $R_0$, in such a way that 
\begin{equation}\label{defphin1}
{\phi_n}_{|\partial R_0}=\psi_{|\partial R_0}.
\end{equation}
Then we will be able to extend $\phi_n$ to the square $Q$, requiring its continuity and defining, for every $(x,y)\in {\rm{int}}(R_i)$,
\begin{equation}\label{defphin2}
\phi_n(x,y)=\phi_n((x,y)-(0, i/n^2))+i/n^2(b,d)
\end{equation}
(notice that the point $((x,y)-(0, i/n^2))$ belongs to ${\rm{int}}(R_0)$). We define $\phi_n$ on $R'$ as the linear map $\phi_n(x,y)=A'(x,y)^t$, where
$$A':=\left(\begin{array}{cc}
a & (n^{1/2}-1)b\\
c & (n^{1/2}-1)d\end{array}\right).$$
The map $\phi_n$ is now uniquely defined on $Q$ by the conditions \eqref{defphin1}, \eqref{defphin2} and by the requirement that $\phi_n$ is continuous on $Q$ and affine on $R',R''$ and on the triangles $T_1,\ldots,T_4$ (and on their symmetric copies).

In particular on $R''$ there holds
$$\nabla\phi_n=\left(\begin{array}{cc}
a & (1/(n^{1/2}-1))b\\
c & (1/(n^{1/2}-1))d\end{array}\right).$$

Denoting $F$ the set $$F:=\bigcup_{i=0}^{n^2-1}({\rm{int}}(R')+(0,i/n^2)),$$
it is easy to see that \eqref{eqF} holds. More precisely, since we want that $F$ is the union of pairwise disjoint open triangles, we should replace the set ${\rm{int}}(R')$ with the union of two disjoint open triangles such that the closure of their union is $R'$.
Notice also that $\phi_n=\psi$ on $T_1$ and $T_4$.

We now want to compute the variation $Var(\phi_n,Q)$.
Since $\phi_n$ is piecewise affine, this is equivalent to compute the \emph{energy}
$$\mathbb{E}(\phi_n):=\int_Q|\nabla\phi_n|_1 dx,$$
where we denoted by $|\cdot|_1$ the norm on ${\rm{Mat}}(2\times 2)$ given by 
$$\left|\left(\begin{array}{cc}
a & b\\
c & d\end{array}\right)\right|_1=|a|+|b|+|c|+|d|.$$
By construction, we have
$$\mathbb{E}(\phi_n)=n^2\mathbb{E}({\phi_n}_{|R_0}).$$
Moreover, by the 1-homogeneity of the energy, we can compute 
\begin{equation}\label{eq1}
\begin{split}
n^2\mathbb{E}({\phi_n}_{|R'})&=
n^2\left(1-\frac{2}{n}\right)\left(\frac{1}{n^{5/2}}\right)\left|\left(\begin{array}{cc}
a & (n^{1/2}-1)b\\
c & (n^{1/2}-1)d\end{array}\right)\right|_1\\
&=\left(1-\frac{2}{n}\right)\left|\left(\begin{array}{cc}
(1/n^{1/2})a & (1-(1/n^{1/2}))b\\
(1/n^{1/2})c & (1-(1/n^{1/2}))d\end{array}\right)\right|_1
\end{split}
\end{equation}
and analogously
\begin{equation}\label{eq2}
\begin{split}
n^2\mathbb{E}({\phi_n}_{|R''})&=
n^2\left(1-\frac{2}{n}\right)\left(\frac {1}{n^2}-\frac{1}{n^{5/2}}\right)\left|\left(\begin{array}{cc}
a & (1/(n^{1/2}-1))b\\
c & (1/(n^{1/2}-1))d\end{array}\right)\right|_1\\
&=\left(1-\frac{2}{n}\right)\left|\left(\begin{array}{cc}
(1-(1/n^{1/2}))a & (1/n^{1/2})b\\
(1-(1/n^{1/2}))c & (1/n^{1/2})d\end{array}\right)\right|_1.
\end{split}
\end{equation}
Combining \eqref{eq1} and \eqref{eq2} we have that
\begin{equation}\label{eq3}
\mathbb{E}({\phi_n}_{|(R'\cup R'')})=\mathbb{E}(\psi_{|(R'\cup R'')}).
\end{equation}

Regarding the energy of $\phi_n$ in the triangles $T_1,\ldots, T_4$, we have, trivially
$$\mathbb{E}({\phi_n}_{|T_1})=\mathbb{E}({\psi}_{|T_1})$$
and 
$$\mathbb{E}({\phi_n}_{|T_4})=\mathbb{E}({\psi}_{|T_4}).$$
Moreover on $T_2$, we have
$$\nabla\phi_n=\left(\begin{array}{cc}
a+((1/n)-2/n^{3/2})b & (n^{1/2}-1)b\\
c+((1/n)-2/n^{3/2})d & (n^{1/2}-1)d\end{array}\right)$$
and therefore 
\begin{equation}\label{eq4}
{|\nabla\phi_n|}_1\leq n^{1/2}|\nabla\psi|_1.
\end{equation}
Finally on $T_3$, we have
$$\nabla\phi_n=\left(\begin{array}{cc}
a+((1/n)-2/n^{3/2})b & (1/(n^{1/2}-1))b\\
c+((1/n)-2/n^{3/2})d & (1/(n^{1/2}-1))d\end{array}\right)$$
and therefore 
\begin{equation}\label{eq5}
{|\nabla\phi_n|}_1\leq 2|\nabla\psi|_1.
\end{equation}

With similar computations, one can verify that the equations \eqref{eq4} and \eqref{eq5} are satisfied also on the symmetric copies of $T_2$ and $T_3$, respectively. Hence, combining \eqref{eq3} with \eqref{eq4} and \eqref{eq5}, we have
$$\mathbb{E}({\phi_n})\leq(1+(2/n^{1/2}))\mathbb{E}(\psi).$$
\end{proof}

\section{Proof of Theorem \ref{main}}
\label{s4}

We have to prove that the sets $A_n$ defined in the Introduction are open and dense in $(X,d_X)$.

\begin{lemma}\label{lopen}
For every $n\in\N$ the set $A_n$ is open.
\end{lemma}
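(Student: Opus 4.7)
The plan is to show that whenever $f\in A_n$, any $g\in X$ sufficiently close to $f$ witnesses the defining property of $A_n$ using the very same set $E$ that works for $f$. More precisely, fix $f\in A_n$ and let $E\subset Q$ be the finite union of pairwise disjoint open triangles with $|E|<1/n$ and $|f(E)|>1-1/n$ provided by the definition. I will prove that for a suitably small $\delta>0$, every $g\in X$ with $d(f,g)<\delta$ still satisfies $|g(E)|>1-1/n$; since $|E|<1/n$ is automatic (the same set is used), this gives $g\in A_n$.

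The first step is to inner-approximate $E$. For $\delta>0$, put $E_\delta:=\{x\in E:\dist(x,\bd E)>\delta\}$. Since $E$ is open, the family $E_\delta$ is increasing and exhausts $E$ as $\delta\to 0^+$. Because $f$ is a homeomorphism, the family $f(E_\delta)$ is increasing with union $f(E)$, so by monotone convergence $|f(E_\delta)|\nearrow|f(E)|>1-1/n$. Hence I can fix $\delta>0$ small enough that
$$|f(E_\delta)|>1-1/n.$$

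The second step uses the distance on $X$. If $d(f,g)<\delta$ then in particular $\|f^{-1}-g^{-1}\|_\infty<\delta$. For each $y\in f(E_\delta)$, the preimage $f^{-1}(y)\in E_\delta$ is at distance greater than $\delta$ from $\bd E$, and $|g^{-1}(y)-f^{-1}(y)|<\delta$, so $g^{-1}(y)\in E$, i.e.\ $y\in g(E)$. Therefore $f(E_\delta)\subset g(E)$ and hence $|g(E)|\geq|f(E_\delta)|>1-1/n$, so $g\in A_n$.

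The argument is genuinely simple; the only point that deserves comment is the monotone convergence step. One might worry that, since $BV$ homeomorphisms can distort measures drastically (this is, after all, the whole point of the paper), passing from $|E_\delta|\to|E|$ to information about $|f(E_\delta)|$ is delicate. The key observation is that we do not need $f$ to preserve small measures: openness of $E$ guarantees that the sets $E_\delta$ \emph{increase} to $E$, and this monotonicity is preserved by the injective continuous map $f$, so that $|f(E_\delta)|\to|f(E)|$ follows directly from the monotone convergence of measures. No fine regularity property of $f$ is required.
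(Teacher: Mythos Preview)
Your proof is correct and follows essentially the same idea as the paper's: inner-approximate an open set so that a uniform perturbation cannot push points across the boundary, then conclude the desired containment. The one difference is a duality: the paper shrinks the \emph{image} sets $f(T_i)$ to $f(T_i)^\eta=\{y:\dist(y,f(T_i)^c)>\eta\}$ and uses $\|f-g\|_\infty<\eta$ to get $g(T_i)\supset f(T_i)^\eta$ (via the observation that $f(Q\setminus T_i)=Q\setminus f(T_i)$), whereas you shrink the \emph{domain} set $E$ to $E_\delta$ and use $\|f^{-1}-g^{-1}\|_\infty<\delta$ to get $g(E)\supset f(E_\delta)$. Both variants exploit a different summand of the metric $d$; your version is perhaps slightly more transparent because the containment $f(E_\delta)\subset g(E)$ is a one-line pointwise check.
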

\begin{proof}
Take $f\in A_n$. Let $1/n>\varepsilon>0$ and let $T_1,\ldots, T_m$ be pairwise disjoint open triangles in $Q$ such that, denoting $E=\bigcup_i T_i$, there holds
\begin{enumerate}
\item $|E|<1/n-\varepsilon$;
\item $|f(E)|>1-1/n+\varepsilon$.
\end{enumerate}
Since the image of each triangle $T_i$ is open, then there exists $\eta>0$ such that, denoting for every open set $B$
$$B^\eta:= \{x\in B: dist(x,B^C)>\eta\},$$
there holds
$$|f(T_i)^\eta|\geq(1-\varepsilon)|f(T_i)|,$$
for every $i=1,\ldots,m$.

Consider now $g\in X$ with $d_X(f,g)<\eta$, In particular $\|f-g\|_\infty<\eta$, hence $g(T_i)\supset f(T_i)^\eta$, for every $i=1,\ldots,m$. Therefore 
$$|g(E)|=\sum_{i=1}^m|g(T_i)|\geq\sum_{i=1}^m|f(T_i)^\eta|\geq(1-\varepsilon)|f(E)|>1-1/n.$$
Hence $g\in A_n$.
\end{proof}

\begin{lemma}\label{ldense}
For every $n\in\N$ the set $A_n$ is dense in $X$.
\end{lemma}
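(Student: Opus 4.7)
The plan is to implement the construction sketched in the Introduction, so the task reduces to producing, given $f \in X$, $n \in \N$, and $\varepsilon > 0$, a piecewise affine orientation-preserving homeomorphism $g_\varepsilon \in X$ with $d(f, g_\varepsilon) < \varepsilon/4$ (via the approximation theorem of \cite{PR}), a triangulation $\tau$ adapted to $g_\varepsilon$ and refined so that $\mathrm{diam}(T), \mathrm{diam}(g_\varepsilon(T)) \leq \varepsilon/8$ for every $T \in \tau$, and then a modification $f_\varepsilon$ of $g_\varepsilon$ satisfying the four properties (1)--(4) listed there. Writing $\psi_T := g_\varepsilon|_T$, the real issue is to construct, for each $T \in \tau$, a piecewise affine homeomorphism $h_T : T \to \psi_T(T)$ that agrees with $\psi_T$ on $\partial T$, has variation close to that of $\psi_T$, and maps a finite union of small open triangles onto almost all of $\psi_T(T)$.

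To produce $h_T$, fix a parameter $N \in \N$ with $N > n$ and a small $\delta > 0$, both to be chosen at the end. Cover all of $T$ except a remainder of area $< \delta\, \mathrm{Area}(T)$ by finitely many pairwise disjoint closed axis-aligned squares $S_1, \ldots, S_k$ with $S_i \subset \mathrm{int}(T)$ (e.g.\ via a sufficiently fine dyadic grid, using that $\partial T$ is Lebesgue-negligible). On each $S_i$ the restriction $\psi_T|_{S_i}$ is an affine homeomorphism defined on an axis-aligned square, so Proposition \ref{propphin} applied on $S_i$ with parameter $N$ produces a piecewise affine homeomorphism $\phi^{(i)}_N : S_i \to \psi_T(S_i)$ equal to $\psi_T$ on $\partial S_i$, together with a finite disjoint union of open triangles $F^{(i)} \subset S_i$ satisfying $|F^{(i)}|/|S_i| < 1/N$, $|\phi^{(i)}_N(F^{(i)})|/|\psi_T(S_i)| > 1 - 1/N$, and $Var(\phi^{(i)}_N, S_i) \leq (1 + 1/N)\, Var(\psi_T, S_i)$. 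Set $h_T := \phi^{(i)}_N$ on each $S_i$ and $h_T := \psi_T$ elsewhere; continuity across the $\partial S_i$ gives a piecewise affine map, while bijectivity (hence homeomorphy, the sets being compact) follows from the bijectivity of $\psi_T$ and the disjointness of the images $\psi_T(S_i)$ from each other and from $\psi_T(T \setminus \bigcup_i S_i)$.

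Gluing the $h_T$'s across $\tau$ yields the candidate $f_\varepsilon$. Properties (1) and (2) are built into the construction; for (4), taking $F := \bigcup_i F^{(i)} \subset T$ gives $|F|/|T| < (1/N)\sum_i|S_i|/|T| \leq 1/N < 1/n$ and
\[
\frac{|h_T(F)|}{|h_T(T)|} \;\geq\; (1 - 1/N)\,\frac{\sum_i |\psi_T(S_i)|}{|\psi_T(T)|} \;\geq\; (1 - 1/N)(1-\delta),
\]
which exceeds $1 - 1/n$ provided $(1-1/N)(1-\delta) > 1 - 1/n$. For (3), summing the variation bound over $\tau$ yields $Var(f_\varepsilon, Q) \leq (1 + 1/N)\, Var(g_\varepsilon, Q)$; since $g_\varepsilon \in X$ gives $Var(g_\varepsilon, Q) < M$ strictly, $N$ can be taken large enough that $Var(f_\varepsilon, Q) < M$ and that the third term in $d(f_\varepsilon, g_\varepsilon)$ is at most $\varepsilon/2$. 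The triangulation diameter bound and the footnote in the Introduction then give $\|f_\varepsilon - g_\varepsilon\|_\infty + \|f_\varepsilon^{-1} - g_\varepsilon^{-1}\|_\infty < \varepsilon/4$, so $d(f, f_\varepsilon) < \varepsilon$; and $f_\varepsilon \in A_n$ because the union over $T \in \tau$ of the sets $F$ is a finite disjoint union of open triangles in $Q$ satisfying the global area inequalities by summation.

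The main obstacle is the parameter bookkeeping: $N$ must be taken simultaneously large enough (i) to satisfy the image-area inequality in (4) despite the tiling defect $\delta$, and (ii) to keep the multiplicative variation increase $1/N$ small compared with the gap $M - Var(g_\varepsilon, Q)$ so that both the strict inequality $Var(f_\varepsilon, Q) < M$ and the $\varepsilon/2$ bound in (3) are preserved; only after fixing such an $N$ can $\delta$ be chosen. Everything else is routine, enabled crucially by the fact that Proposition \ref{propphin} is formulated for axis-aligned squares, which can be packed inside any triangle with arbitrarily small defect.
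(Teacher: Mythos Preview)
Your proof is correct and follows essentially the same route as the paper's own argument: approximate $f$ by a piecewise affine $g_\varepsilon$ via \cite{PR}, refine the triangulation for the $\varepsilon/8$ diameter bound, tile each triangle by axis-aligned squares up to a small area defect, and apply Proposition~\ref{propphin} on each square. The only cosmetic difference is that the paper uses a single parameter $m>2n$ in place of your pair $(N,\delta)$, setting both the tiling defect to $1/m$ and the Proposition parameter to $m$; your two-parameter version works just as well.
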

\begin{proof}
Fix $f\in X$ and $\varepsilon>0$. 
We want to find $f_{\varepsilon}\in A_n$ with ${d_X(f,f_{\varepsilon})<\varepsilon}$. By \cite[Theorem A]{PR}, we can find a sequence of (finitely) piecewise affine homeomorphisms $(g_i)_{i\in\N}:Q\to Q$ such that
\begin{enumerate}
\item ${g_i}_{|\partial Q}=Id;$
\item $\|g_i-f\|_\infty$ tends to 0 for $i\to\infty$;
\item $\|g_i^{-1}-f^{-1}\|_\infty$ tends to 0 for $i\to\infty$;
\item $\lim_{i\to\infty}Var(g_i,Q)\leq Var(f,Q)$.
\end{enumerate}

Given (2), the validity of (3) is a simple consequence of the uniform continuity of $f^{-1}$. Indeed such property implies that if $\|g_i-f\|_\infty$ is small, then $\|g_i^{-1}-f^{-1}\|_\infty$ is also small. To prove it, fix $\varepsilon>0$ and let $\delta>0$
be such that if $|x-y|<\delta$ then $|f^{-1}(x)-f^{-1}(y)|<\varepsilon$. Now take $i\in\N$ such that $\|g_i-f\|_\infty<\delta$. We want to prove that $\|g_i^{-1}-f^{-1}\|_\infty<\varepsilon$. Assume by contradiction there exists $x_0$ such that
$|g_i^{-1}(x_0)-f^{-1}(x_0)|>\varepsilon$. Denoting $x_1:=g_i^{-1}(x_0)$ and $x_2:=f^{-1}(x_0)$, we have $|g_i(x_1)-f(x_1)|<\delta$. Hence, denoting $x_3:=f(x_1)$, we have $|x_3-x_0|<\delta$, but $|f^{-1}(x_3)-f^{-1}(x_0)|>\varepsilon$, which is a contradiction.\\

Using the lower semicontinuity of the variation w.r.t. the uniform convergence, from (1)-(4) we deduce that there exists a piecewise affine homeomorphism $g_{\varepsilon}\in X$ with
\begin{equation}\label{eq6}
d_X(f,g_{\varepsilon})<\varepsilon/4.
\end{equation}
We can also assume
\begin{equation}\label{eq7}
\left|\frac{1}{M-Var(f,Q)}-\frac{1}{M-Var(g_{\varepsilon},Q)}\right|<\varepsilon/8.
\end{equation}

Now we take a finite triangulation of $Q$ such that $g_{\varepsilon}$ is affine on each triangle. If necessary, we can refine such triangulation in order to obtain a new finite triangulation $\tau$ such that the diameter of all triangles $T_i\in\tau$ and of their images through $g_{\varepsilon}$ are less than $\varepsilon/8$.

By \eqref{eq7} we can take $m\in\N$, $m>2n$ such that 
\begin{equation}\label{eq8}
\left|\frac{1}{M-Var(f,Q)}-\frac{1}{M-CVar(g_{\varepsilon},Q)}\right|<\varepsilon/2,
\end{equation}
for every $C\in [1-(1/m),1+1/m]$.
We define the homeomorphism $f_{\varepsilon}$ as follows.
For every $T_i\in\tau$ take finitely many closed squares $Q_i^j$ with pairwise disjoint interiors and with edges parallel to the coordinate lines such that $Q_i^j\subset T_i$ and
\begin{equation}\label{eq9}
\left|\bigcup_jQ_i^j\right|\geq \left(1-\frac{1}{m}\right)|T_i|.
\end{equation}

For every $i,j$, define $f_{\varepsilon}$ on $Q_i^j$ as the map obtained by replacing the map $g_{\varepsilon}$ with the map ${(g_{\varepsilon})}_m$ obtained applying Proposition \ref{propphin} with $n=m$ and $\phi={g_{\varepsilon}}_{|Q_i^j}$.
For every $i$, define $f_{\varepsilon}:=g_{\varepsilon}$ on $T_i\setminus(\bigcup_jQ_i^j)$.

By \eqref{eq6},\eqref{eq8}, point (1) of Proposition \ref{propphin} and the property of the triangulation, we have
$$d(f,f_{\varepsilon})\leq d(f,g_{\varepsilon})+d(f_{\varepsilon},g_{\varepsilon})<((\varepsilon/4)+(\varepsilon/2))+(\varepsilon/4)=\varepsilon.$$

By point (3) of Proposition \ref{propphin} and \eqref{eq9} we have that, denoting by $(F_i^j)_m$ the set given by Proposition \ref{propphin} applied with $n=m$ and $\phi={g_{\varepsilon}}_{|Q_i^j}$, and by $F$ the set
$$F:=\bigcup_{i,j}(F_i^j)_m,$$
there holds $|F|< 1/m$ and 
$$|f_{\varepsilon}(F)|>(1-(1/m))(1-(1/m))>1-(2/m)>1-(1/n),$$
hence $f_{\varepsilon}\in A_n$.
\end{proof}

\begin{proof}[Proof of Theorem \ref{main}]
The only thing left to show is that $A\supset\bigcap_{n\in\N} A_n$. Fix $f\in\bigcap_{n\in\N} A_n$. In particular, for every $j\in\N$, we have $f\in\bigcap_{i>j} A_{2^i}$,
hence for every $i\in\N$ with $i>j$ there exists a Borel set $E_i$ with $|E_i|<2^{-i}$ such that $|f(E_i)|>1-2^{-i}$. Therefore denoting $E^j:=\bigcup_{i>j}E_i$, we have $|E^j|<2^{-j}$ and $|f(E^j)|=1$. Since the countable intersection of sets of full measure is a set of full measure  we deduce that,
denoting $E:=\bigcap_{j\in\N}E^j$, we have that $|E|=0$ and $|f(E)|=1$, hence $f\in A$.
\end{proof}

\section{Final remarks}
\label{s5}
\subsection{Higher dimension}
We believe that a natural generalization of Theorem \ref{main} holds also in dimension $d\geq 3$. If the approximation result of \cite{PR} was proved to be valid in any dimension, such generalization could be proved exactly with the argument used above. Namely, the basic building block constructed in Section \ref{s3} could be extended in an affine way along the additional directions and this would preserve the necessary estimates. At the moment, the problem of extending the approximation result of \cite{PR} to higher dimensions is widely open. Nevertheless we also believe that the generalization of our result could be proved independently from the validity of the approximation result, roughly via a composition of our basic building block (or more precisely of the affine extension mentioned above) with the original homeomorphism, locally around points of approximate continuity of the absolutely continuous part of the distributional gradient. For the sake of simplicity, we do not pursue this in the present paper.
\subsection{$W^{1,p}$ homeomorphisms}
In \cite{He}, Hencl proves that for $1\leq p <2$ there exists a homeomorphism $f:Q\to Q$ in $W^{1,p}$ with $f_{|\partial Q}=Id$ satisfying $Jf=0$ a.e. It is easy to see that the set of such homemomorphisms is dense in $W^{1,p}$ with respect to the $C^0$-distance. Therefore a natural question is whether in a suitable complete metric space of $W^{1,p}$ homeomorphisms, these maps are residually many. For $p>1$ a natural setting to answer this question (i.e. a reasonable complete metric space of $W^{1,p}$ homeomorphisms) is the set
$$Y:=\left\{f:Q\rightarrow Q:\; f\;{\rm{is\;a\;}}W^{1,p}\;{\rm{homeomorphism}},\; f_{|\partial Q}=Id,\; \int_Q|Df|^p\leq M\right\}$$
for an arbitrary constant $M>1$, with the distance
$$d_Y(f,g):=\|f-g\|_\infty+\|f^{-1}-g^{-1}\|_\infty.$$
Observe that one cannot consider as distance the natural norm of $W^{1,p}$, because the convergence in such norm would also imply the convergence of the Jacobians, almost everywhere.

Trying to imitate our strategy, one could rely on the fact that in \cite{IKO} the authors prove that it is possible to approximate a $W^{1,p}$ homeomorphism $(p>1)$ uniformly and in the $W^{1,p}$ norm by piecewise affine homeomorphisms. Nevertheless, there is no hope that homeomorphisms with zero Jacobian almost everywhere are residual in the metric space $(Y,d_Y)$, since they are not even dense. Indeed, take a homeomorphism $f\in Y$ with $Jf>0$ on a set of positive measure and satisfying $\int_Q|Df|^p=M$. Assume that there exist homeomorphisms $f_n$ in $Y$ with $d_Y(f_n,f)\to 0$ as $n\to\infty$. Since the quantity $\int_Q|Df|^p$ is lower semicontinuous with respect to the uniform convergence, this would force 
\begin{equation}\label{efin}
\int_Q|Df_n|^p\rightarrow M=\int_Q|Df|^p, \quad \mbox{as $n\to\infty$.}
\end{equation}
In turn, since the norm on $W^{1,p}$ is uniformly convex, the uniform convergence and \eqref{efin} imply the convergence in norm, which forces the convergence of the Jacobians, too. Indeed, in every uniformly convex space, if $x_n\rightharpoonup x$ and $\|x_n\|\to\|x\|$ then $\|x_n-x\|\to 0$ (see Proposition 3.32 of \cite{Br}). In particular we can deduce that it is not possible to extend Proposition \ref{propphin} to the setting of $W^{1,p}$ homeomorphisms: roughly speaking, in this class there is a positive minimal cost in the energy to approximate an affine homeomorphism with homeomorphisms that map a small set onto a large one. Notice that, since the subset of homeomorphisms $f\in Y$ satisfying $\int_Q|Df|^p=M$ is residual in $Y$, then we have actually proved that the set of $W^{1,p}$ homeomorphisms with zero Jacobian almost everywhere is of first category in $Y$. 
Indeed we have proved that the set of homeomorphisms $f\in Y$ satisfying $\int_Q|Df|^p=M$ and $Jf>0$ on a set of positive measure is relatively open. To prove that it is dense, we can use the same construction described at the end of \S \ref{s2}. Moreover the result is independent on the dimension of the ambient space: let us summarize all these observations in the following
\begin{theorem}\label{firstcatp}
Let $d\geq 2$ and $Q^d:=(0,1)^d$. Fix $1<p<d$, $M>1$. Define
$$Y:=\left\{f:Q^n\rightarrow Q^d: f\;{\rm{is\;a\;}}W^{1,p}\;{\rm{homeomorphism}},\; f_{|\partial Q^d}=Id, \int_{Q^d}|Df|^p\leq M\right\}$$
and the distance on $Y$
$$d_Y(f,g):=\|f-g\|_\infty+\|f^{-1}-g^{-1}\|_\infty.$$
Then the set $A$ of all homeomorphisms $f\in Y$ with $Jf=0$ a.e. is of first category in $Y$, i.e. $Y\setminus A$ is residual in $Y$.
\end{theorem}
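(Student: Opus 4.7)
The plan is to prove the stronger statement that $\overline{A}$ is nowhere dense in $(X,d)$, which is sufficient for first category. Following the sketch in the paragraph preceding the statement, I would work with
$$S := \bigl\{ f \in X : \textstyle\int_{Q^n} |Df|^p = M \text{ and } Jf > 0 \text{ on a set of positive measure} \bigr\},$$
which sits inside $X \setminus A$, and establish two facts: (i) $\overline{A} \cap S = \emptyset$, and (ii) $S$ is dense in $X$. Together these force $\overline{A}$ to have empty interior.

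For (i), I would argue by contradiction: assume $f \in S$ and $f_n \in A$ with $d(f_n,f)\to 0$. Lower semicontinuity of $h \mapsto \int|Dh|^p$ under uniform convergence, combined with the constraint $\int|Df_n|^p \leq M$ and the equality $\int|Df|^p = M$, forces $\int|Df_n|^p \to M = \int|Df|^p$. Weak compactness yields $f_n \rightharpoonup f$ in $W^{1,p}$ along a subsequence; weak convergence plus norm convergence, combined with the uniform convexity of $W^{1,p}$ for $1<p<\infty$ (see Proposition 3.32 of \cite{Br}), upgrades this to strong $W^{1,p}$ convergence. Passing to a further subsequence gives $Df_n(x)\to Df(x)$ and hence $Jf_n(x)\to Jf(x)$ pointwise a.e. Since each $Jf_n\equiv 0$ a.e., this forces $Jf = 0$ a.e., contradicting $f \in S$.

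For (ii), given $f \in X$ and $\varepsilon>0$, I would produce $g \in S$ with $d(f,g)<\varepsilon$ as follows. First apply the piecewise affine approximation of \cite{IKO} to obtain a piecewise affine $\tilde f \in X$ with $d(f,\tilde f)<\varepsilon/2$ and $\int|D\tilde f|^p \leq \int|Df|^p$. Next pick a small cube $C \Subset Q^n$ on which $\tilde f$ is affine, with $\mathrm{diam}(C)$ and $\mathrm{diam}(\tilde f(C))$ both below $\varepsilon/4$. Replace $\tilde f|_C$ with a piecewise affine, orientation-preserving homeomorphism $h:C \to \tilde f(C)$ agreeing with $\tilde f$ on $\partial C$ and realizing the prescribed total energy $\int_C|Dh|^p = M - \int_{Q^n \setminus C}|D\tilde f|^p$. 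Setting $g := \tilde f$ outside $C$ and $g := h$ on $C$ yields $g \in X$ with $\int|Dg|^p = M$; since $h$ is piecewise affine and orientation-preserving, $Jg > 0$ on a set of positive measure, so $g \in S$. The diameter bounds yield $\|g-\tilde f\|_\infty < \varepsilon/4$ and $\|g^{-1}-\tilde f^{-1}\|_\infty<\varepsilon/4$, whence $d(f,g)<\varepsilon$.

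The main obstacle lies in the construction of the insertion $h$ with exactly prescribed energy. My plan is to verify that the set of values of $\int_C|Dh|^p$ attainable by piecewise affine, orientation-preserving homeomorphisms $C \to \tilde f(C)$ with the given boundary data forms an interval $[E_{\min},+\infty)$, where $E_{\min}$ is attained by the affine extension. Higher energies are reached by inserting oscillatory piecewise affine \emph{wiggle} blocks (in the spirit of the building block from \cite{He}, adapted to the $W^{1,p}$ setting), and continuity along a one-parameter family of such insertions allows an intermediate value argument to hit the exact target $M - \int_{Q^n \setminus C}|D\tilde f|^p$. Maintaining injectivity, the identity boundary and orientation-preservation throughout this procedure is the technical heart of the construction. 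Crucially, the impossibility remark preceding the theorem does not obstruct this plan: one needs only to realize a given energy value, not to map a small set to a large one.
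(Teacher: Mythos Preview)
Your overall architecture is exactly the paper's: show that the set $S=\{f\in X:\int|Df|^p=M,\ Jf>0\text{ on a set of positive measure}\}$ is disjoint from $\overline A$ and dense in $X$, so that $\overline A$ has empty interior. Part~(i) of your argument is correct and coincides with the paper's (lower semicontinuity forces the energies to converge, uniform convexity upgrades weak to strong $W^{1,p}$ convergence, hence the Jacobians converge a.e.\ along a subsequence).

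The gap is in part~(ii). You invoke \cite{IKO} to produce a piecewise affine approximant $\tilde f$, but that result is \emph{planar}: it gives nothing for $n>2$, whereas the theorem is stated for all $1<p<n$. Moreover, \cite{IKO} does not guarantee $\int|D\tilde f|^p\le\int|Df|^p$; it only gives closeness in the $W^{1,p}$ norm, so when $\int|Df|^p=M$ you might land outside $X$. The paper avoids piecewise affine approximation altogether: for density of $S$ it simply adapts the elementary ``contraction~$+$~identity on a concentric frame'' interpolation described at the end of Section~\ref{s2}. That construction is dimension-free, places the identity (hence $J=1$) on a frame of positive measure, and by tuning the frame width (and, if needed, inserting a local oscillation to raise the energy back to $M$) one hits the target energy by an intermediate-value argument directly on a one-parameter family, without ever needing $\tilde f$ to be affine on a subcube. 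Your wiggle-block/intermediate-value idea for the insertion $h$ is sound in spirit, but the detour through \cite{IKO} is both unnecessary and, for $n\ge3$, not justified.
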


\subsection{$W^{1,1}$ homeomorphisms}
In \cite{HP} the authors prove that it is possible to approximate a $W^{1,1}$ homeomorphism uniformly and in the $W^{1,1}$ norm by piecewise affine homeomorphisms. Moreover in Proposition \ref{propphin} (1), it is equivalent to consider the variation $Var(\phi,Q)$ or the energy $\mathbb{E}(\phi)$, hence if one considers the metric space $(Y,d_Y)$ as defined in the previous subsection, for $p=1$, it is not difficult to adapt the arguments presented in Section \ref{s4}, to prove that the set of $W^{1,1}$ homeomorphisms of $Q$ onto itself mapping a set of measure smaller than $1/n$ onto a set of measure larger than $1-1/n$ are open and dense. The issue here is that $(Y,d_Y)$ is not complete, because a sequence of $W^{1,1}$ maps converging uniformly and with equi-bounded energies may converge to a map which is in $BV$ but not in $W^{1,1}$. Hence the countable intersection of open and dense sets might in principle be empty. The completion of such space is a space of $BV$ homeomorphisms, with a uniform bound on the variation. However, such complete metric space is too large for $W^{1,1}$ homeomorphisms with zero Jacobian almost everywhere to be residual. Indeed in Proposition \ref{firstcat1}, we show that in such metric space the subset of $W^{1,1}$ homeomorphisms is of first category. For the sake of brevity, and since the result is not surprising, we prove such statement only in the metric space defined in the Introduction. The same can be done, with minor changes, in the completion of the metric space defined in the previous subsection, for $p=1$. 

\begin{proposition}\label{firstcat1}
Let $(X,d_X)$ be the metric space defined in the Introduction. Then the set $A$ of all $W^{1,1}$ homeomorphisms in $X$ is of first category. 
\end{proposition}
\begin{proof}
Define
$$A_n:=\{f\in X: \exists E\subset Q, |E|<1/n, Var(f,E)>1/2-1/n\},$$
where $E$ is the union of finitely many pairwise disjoint open triangles.
Clearly the intersection of the $A_n$'s does not contain any $W^{1,1}$ homeomorphism, therefore, to prove the proposition it is sufficient to show that the $A_n$'s are open and dense. The openness is just a consequence of the lower semicontinuity of the variation with respect to the uniform convergence. The density can be achieved as in Lemma \ref{ldense}: it is sufficient to observe, from \eqref{eq1} that the maps $\phi_n, \psi$ and the set $F$ constructed in \S \ref{s3} satisfy 
\begin{equation}\label{efin2}
Var(\phi_n, F)>(1/2-1/\sqrt{n})Var(\psi, Q).
\end{equation}
More precisely, such inequality is satisfied tout court if $|b|+|d|\geq|a|+|c|$. In case $|b|+|d|<|a|+|c|$, actually one should slightly modify the map $\phi_n$ to obtain \eqref{efin2}: roughly speaking, it is sufficient to ``switch'' the coordinates $(x,y)$.
\end{proof}

\bibliographystyle{plain}

\begin{thebibliography}{99}
\setlength{\itemsep}{3pt}
\newcommand{\aut}[1]{\textsc{#1}}

\bibitem{AFP}
\aut{Ambrosio, Luigi; Fusco, Nicola; Pallara, Diego.}
\newblock {\it Functions of Bounded Variation and Free Discontinuity Problems.}
\newblock Oxford Mathematical Monographs. The Clarendon Press, Oxford University Press, New York, 2000.

\bibitem{Br}
\aut{Brezis, Haim.}
\newblock {\it Functional Analysis, Sobolev Spaces and Partial Differential Equations.}
\newblock Springer-Verlag, 2010.

\bibitem{He}
\aut{Hencl, Stanislav.}
\newblock Sobolev homeomorphism with zero Jacobian almost everywhere.
\newblock \textit{J. Math. Pures Appl.} 
(9) 95 (2011), no. 4, 444-458.

\bibitem{HP}
\aut{Hencl, Stanislav; Pratelli, Aldo.}
\newblock Diffeomorphic Approximation of $W^{1,1}$ Planar Sobolev Homeomorphisms.
\newblock \textit{Preprint available at http://arxiv.org/abs/1502.07253v1} 

\bibitem{IKO}
\aut{Iwaniec, Tadeusz; Kovalev, Leonid V.; Onninen, Jani.}
\newblock Diffeomorphic Approximation of Sobolev Homeomorphisms.
\newblock \textit{Arch. Rational Mech. Anal.}
201 no. 3 (2011), 1047–1067.

\bibitem{PR}
\aut{Pratelli, Aldo; Radici, Emanuela.}
\newblock Approximation of planar BV homeomorphisms by diffeomorphisms.
\newblock Preprint available at: http://cvgmt.sns.it/paper/3521/

\bibitem{Ru}
\aut{Rudin, Walter.}
\newblock {\it Functional analysis. Second edition.}
\newblock International Series in Pure and Applied Mathematics. McGraw-Hill, Inc., New York, 1991.

\end{thebibliography}

%%%%%%%%%%%%%%%%%%%%%%%%%%%%%%%%%%%%%%%%%%%%%%%%%%%%%%%%%%%%%%%%%
%
%	AFFILIATIONS
%
%%%%%%%%%%%%%%%%%%%%%%%%%%%%%%%%%%%%%%%%%%%%%%%%%%%%%%%%%%%%%%%%%

\vskip .5 cm

{\parindent = 0 pt\begin{footnotesize}

Andrea Marchese
\\
Institut f\"ur Mathematik,
Mathematisch-naturwissenschaftliche Fakult\"at,
Universit\"at Z\"urich\\
Winterthurerstrasse 190,
CH-8057 Z\"urich,
Switzerland\\
e-mail: {\tt andrea.marchese@math.uzh.ch}

\end{footnotesize}
}

\end{document}